\documentclass[12pt]{amsart}
\usepackage[margin=1in]{geometry}
\usepackage{amsthm}
\usepackage{amsmath,xypic}
\usepackage{epigraph}
\usepackage{graphicx}
\usepackage{natbib}
\usepackage{enumitem}
\newcommand{\be}{\begin{equation}}
\newcommand{\ee}{\end{equation}}
\newcommand{\bes}{\begin{equation*}}
\newcommand{\ees}{\end{equation*}}
\newcommand{\bea}{\begin{eqnarray}}
\newcommand{\eea}{\end{eqnarray}}
\newcommand{\beas}{\begin{eqnarray}}
\newcommand{\eeas}{\end{eqnarray}}
\newcommand{\ben}{\begin{note}}
\newcommand{\een}{\end{note}}
\newcommand{\bexl}{\vskip0.1em\noindent\hrulefill\vskip1em\begin{ExerciseList}}
\newcommand{\eexl}{\end{ExerciseList}\hrulefill}

\newcommand{\bth}{\begin{theorem}}
\newcommand{\eth}{\end{theorem}}
\newcommand{\bpro}{\begin{prop}}
\newcommand{\epro}{\end{prop}}
\newcommand{\bp}{\begin{proof}}
\newcommand{\ep}{\end{proof}}
\newcommand{\blem}{\begin{lemma}}
\newcommand{\elem}{\end{lemma}}
\newcommand{\bn}{\begin{note}}
\newcommand{\en}{\end{note}}
\newcommand{\br}{\begin{remark}}
\newcommand{\er}{\end{remark}} 
\newcommand{\bed}{\begin{defn}}
\newcommand{\eed}{\end{defn}}

\newcommand{\bcon}{\begin{conj}}
\newcommand{\econ}{\end{conj}}

\vfuzz2pt 
\hfuzz2pt 

\newtheorem{theorem}[equation]{Theorem}      
\newtheorem{lemma}[equation]{Lemma}          %
\newtheorem{corollary}[equation]{Corollary}  
\newtheorem{proposition}[equation]{Proposition}

\theoremstyle{definition}
\newtheorem{conj}[equation]{Conjecture}

\theoremstyle{definition}
\newtheorem{defn}[equation]{Definition}
\theoremstyle{remark}

\theoremstyle{definition}
\newtheorem{remark}[equation]{Remark}

\numberwithin{equation}{section}



\let\into=\hookrightarrow

\let\tensor=\otimes

\newcommand{\C}{{\mathbb C}}

\newcommand{\End}{\rm{End}}

\newcommand{\Pic}{{\rm Pic\,}}
\newcommand{\Q}{{\mathbb Q}}

\newcommand{\spec}{{\rm Spec}}

\newcommand{\Z}{{\mathbb Z}}

\renewcommand{\int}{\operatorname{int}}
\renewcommand{\O}{{\mathcal O}}

\newcommand{\alb}{{\rm Alb}}
\newcommand{\et}{\acute{e}t}

\newcommand{\fm}{{\mathfrak{M}}}

\let\fm=\fa

\let\mr=\mapright

\renewcommand{\bpro}{\begin{proposition}}
\renewcommand{\epro}{\end{proposition}}


\begin{document}

\title[]{On varieties with trivial tangent bundle}%
\author{Kirti Joshi}%
\address{Math. department, University of Arizona, 617 N Santa Rita, Tucson
85721-0089, USA.} \email{kirti@math.arizona.edu}

\thanks{}%
\subjclass{}%
\keywords{}%


\begin{abstract}
I prove a crystalline characterization of abelian
varieties in characteristic $p>0$ amongst the class of varieties
with trivial tangent bundle.  I show using my characterization that a smooth, projective, ordinary variety with trivial tangent bundle is an abelian variety if and only if its second crystalline cohomology is torsion free. I also show that a conjecture of Ke-Zheng Li about characteristic $p>0$ varieties with trivial tangent bundles is true for surfaces. I give a new proof of a result of Li and prove a refinement of it. Based on my characterization of abelian varieties I propose modifications  of Li's conjecture which I expect to be true.
\end{abstract}
\maketitle
\epigraph{And here I stand, with all my lore,\\
Poor fool, no wiser than before.}{Goethe, Faust part I}


\section{Introduction}
Let $k$ be a field and let $X$ be a smooth projective variety over
$k$. For $k=\C$ it is well-known, and elementary to prove, that if
$X$ has trivial tangent bundle, then $X$ is an abelian variety. In \citep*{igusa55} it was shown that this is false in characteristic $p>0$. \citep*{mehta87}  studied ordinary varieties with trivial tangent bundle and proved that they
have many properties similar to abelian varieties, including
the Serre-Tate theory of canonical liftings. In Theorem~\ref{thm:characterisation}, I present two equivalent \textit{crystalline characterizations} of abelian varieties amongst the class of varieties with trivial tangent bundle. My characterization is the following: a smooth, projective variety $X$ with trivial tangent bundle is an abelian variety if and only if it has a smooth Picard scheme and it satisfies Hodge symmetry in dimension one (I call such a variety \textit{Picard-Hodge Symmetric}, see Def~\ref{def:picard}). Another equivalent characterization is given in terms of what I call \textit{minimally Mazur-Ogus varieties} (see Def~\ref{def:minimal}). A smooth, projective variety is a minimally Mazur-Ogus variety if $H^2_{cris}(X/W)$ is torsion-free and Hodge de Rham spectral sequence degenerates in dimension one.  In Corollary~\ref{cor:lifting} I show that any smooth, projective variety with trivial tangent bundle which lifts to $W_2$ and with $H^2_{cris}(X/W)$ torsionfree is an abelian variety. In Remark~\ref{rmk:relaxed2} I discuss a natural question raised by Li in his emails to me about weakening the hypothesis of Theorem~\ref{thm:characterisation}.

In \citep*[Conjecture 4.1]{li10} it is conjectured that if $p>3$ then every smooth,
projective variety with trivial tangent bundle is an abelian
variety. I show in Theorem~\ref{thm:surface-case} that this conjecture is true in dimension two.

In dimension two, the most famous example of a surface in characteristic $p=2$ with trivial tangent bundle and which is not an abelian variety is due to \citep*{igusa55} (Igusa surface for $p=2$ has been studied by many authors including Torsten Ekedahl; for a recent treatment of the Igusa surface see \citep*{chai-igusa}). Let me note that the Igusa surface of characteristic $p=2$ also has a less well-known cousin in characteristic $p=3$.

I observe in Theorem~\ref{thm:igusa-var2}  that if $p=2$ then for every $g\geq 2$ and for every $1\leq r<g$, there is a family of varieties dimension $g$ with trivial tangent bundle and which are not  abelian varieties. This family is parameterized by $\fm_r^{\rm ord}[p]\times\fm_{g-r}$ where $\fm_g$ is the  moduli stack of abelian varieties of dimension $g$ and the superscript `{\rm ord}' stands for the ``ordinary locus'' and $\fm_r^{\rm ord}[p]$ is the moduli stack of ordinary abelian varieties with a point of order $p$. For $p=3$ one has a slightly weaker result--see Theorem~\ref{thm:igusa-var3}.

In Remark~\ref{rem:optimal}, I note that the two conditions:  minimally Mazur-Ogus, Picard-Hodge symmetry in Theorem~\ref{thm:characterisation} cannot be weakened or relaxed.  In general, presence of torsion in crystalline cohomology and non-degeneration of Hodge de Rham are not correlated conditions.

In Theorem~\ref{thm:fsplit}, I show  that a smooth, projective, ordinary variety with trivial tangent bundle is an abelian variety if and only if its second crystalline cohomology is torsion free.

In \citep*[Theorem~4.2]{li10} (also see \citep*{li91})  it is shown that if $p>2$ and $X$ is ordinary with trivial tangent bundle then $X$ is an abelian variety.
In Theorem~\ref{th:li-newproof}, I give a new proof of Li's remarkable theorem \citep*[Theorem~4.2]{li10} and in fact I prove a sharpening of \citep*[Theorem~4.2]{li10} and of \citep{mehta87}. I show that for $p=2$ any smooth, projective, ordinary  variety with trivial tangent bundle has a minimal Galois \'etale cover (see Def. \ref{def:minimal-cover}) by an abelian variety with Galois group of exponent $p=2$. Li's approach is based on infinitesimal group actions while I use Serre-Tate canonical liftings (of abelian varieties) and the theory of complex multiplication and its influence on the slopes of Frobenius (see \citep{yu04}).

In the light of my characterization (Theorem~\ref{thm:characterisation}), especially as torsion in the second crystalline cohomology  can occur for any prime $p$,  it seems to me that perhaps the original conjecture of Li (see \citep*[Conjecture 4.1]{li10}) needs to be modified. In fact there are two different versions of Li's conjecture which I conjecture. The first version is the fixed characteristic version which says that there exists an integer $n_1(p)$ such that if $X$ is any variety  of dimension less than $n_1(p)$, with trivial tangent bundle over an algebraically closed field of characteristic $p>0$ is an abelian variety (see Conjecture~\ref{li-mod-p}).

The fixed dimension version (see Conjecture~\ref{li-mod-dim}), inspired by \citep*{liedtke09}, says that for any fixed integer $d\geq 2$.  There exists an integer $n_0(d)$ such that  any smooth, projective variety $X/k$ with dimension $d$ and with trivial tangent bundle is an abelian variety if $p>n_0(d)$. (Clearly for $d=1$, one has $n_0(1)=1$; for $d=2$, $n_0(2)=3$ by Theorem~\ref{thm:surface-case}).

I would like to thank Vikram Mehta for bringing \citep*{li10} to my attention and for many conversations around Li's conjecture. Many years ago (around 1991-92) Vikram had explained to me his paper with V.~Srinivas (see \citep*{mehta87}), and more recently during my visit to India in 2012 we had many lucid conversations on topics of common interest (we were studying some questions on fundamental group schemes). I was amazed by his incredible energy and zest for mathematics while  facing an illness which ultimately took him from us.

I proved Theorem~\ref{thm:characterisation} while I was on a visit to RIMS, Kyoto in 2011. I thank RIMS, Kyoto for providing excellent hospitality and I am grateful to Shinichi Mochizuki providing me with an opportunity to visit RIMS. I thank KeZheng Li for answering many of my elementary and naive questions about his papers \citep*{li91,li10}, and for his comments and corrections. I thank Brian Conrad for pointing out \citep{yu04}.

\section{Characterization of abelian varieties}
In this section I give a crystalline characterization of abelian varieties in the class of smooth, projective varieties with trivial tangent bundle. My characterization requires  the following two definitions.

\bed\label{def:minimal}
Let $X$ be a smooth, projective variety over  an algebraically closed field $k$ with $\mathrm{char}(k)=p>0$. I say that $X$ is a \emph{minimally Mazur-Ogus variety} if $X$
satisfies the following two conditions:
\begin{enumerate}
  \item $H^2_{cris}(X/W)$ is torsion free,
  \item Hodge to de Rham sequence degenerates in dimension one.
\end{enumerate}
\eed

\br
Conditions underlying Mazur-Ogus varieties were introduced  in \citep*{ogus79} where a number of their properties are studied, the nomenclature, I believe, is due to Torsten Ekedahl.  A smooth, projective variety $X$ is a Mazur-Ogus variety if $H^*_{cris}(X/W)$ is torsion-free and Hodge de Rham spectral sequence degenerates.
\er

\bed\label{def:picard}
Let $X$ be a smooth, projective variety over  an algebraically closed field $k$ with $\mathrm{char}(k)=p>0$. I say that $X$ is a \emph{Picard-Hodge symmetric variety} if it
satisfies the following two conditions:
\begin{enumerate}
  \item Picard scheme of $X$ is smooth,
  \item Hodge symmetry holds in dimension one.
\end{enumerate}
\eed

The main theorem of this section is the following  characterization theorem alluded to in the Introduction.

\begin{theorem}\label{thm:characterisation}
Let $X$ be any smooth, projective variety with trivial tangent
bundle. Then the following are equivalent.
\begin{enumerate}
  \item $X$ is minimally Mazur-Ogus,
  \item $X$ is Picard-Hodge symmetric,
  \item $X$ is an abelian variety.
\end{enumerate}
\end{theorem}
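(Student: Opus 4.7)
The implications $(3)\Rightarrow(1)$ and $(3)\Rightarrow(2)$ are classical: for an abelian variety $X$, the cohomology $H^*_{\cris}(X/W)$ is the exterior algebra on $H^1_{\cris}$ and hence torsion-free, the Hodge-to-de Rham spectral sequence degenerates at $E_1$ in all bidegrees, and the Picard scheme is the dual abelian variety, giving smoothness and full Hodge symmetry. It therefore suffices to close the cycle via $(1)\Rightarrow(2)\Rightarrow(3)$.

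The implication $(1)\Rightarrow(2)$ is purely numerical and does not use the trivial tangent bundle hypothesis. Set $g=\dim\alb(X)$. The universal coefficient sequence
\[ 0\to H^1_{\cris}(X/W)\otimes_W k\to H^1_{dR}(X/k)\to H^2_{\cris}(X/W)[p]\to 0, \]
together with torsion-freeness of $H^2_{\cris}(X/W)$ (from (1)), torsion-freeness of $H^1_{\cris}(X/W)$ (always true for a smooth proper variety by Illusie), and the crystalline--Dieudonn\'e identification $H^1_{\cris}(X/W)_{\Q_p}\simeq D(\Pic^0_{\mathrm{red}}(X))_{\Q_p}$, yields $\dim_k H^1_{dR}(X/k)=b_1(X)=2g$. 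Hodge--de Rham degeneration in dimension one then gives $h^{0,1}+h^{1,0}=\dim_k H^1_{dR}(X/k)=2g$. Two general inequalities apply: $g\le h^{0,1}$ since $H^1(X,\O_X)$ is the tangent space at the identity of $\Pic^0(X)$, and $g\le h^{1,0}$ since the pullback $H^0(\alb(X),\Omega^1)\hookrightarrow H^0(X,\Omega^1)$ is injective (a nonzero invariant 1-form vanishing on $X$ would force $\alb(X)$ to lie in a proper sub-abelian variety of $\alb(X)$, contradicting that it generates). Combining the identity with the two bounds forces $g=h^{0,1}=h^{1,0}$: the Picard scheme is smooth and Hodge symmetry in dimension one holds, which is (2).

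The implication $(2)\Rightarrow(3)$ is where the trivial tangent bundle hypothesis is used decisively. Since $\Omega^1_X\cong\O_X^{\oplus n}$ with $n=\dim X$, one has $h^{1,0}=n$, and (2) then gives $g=h^{0,1}=h^{1,0}=n$. Let $a\colon X\to A:=\alb(X)$ be the Albanese morphism; it is a morphism between smooth proper varieties of the same dimension $n$. The injective pullback $a^*\colon H^0(A,\Omega^1_A)\to H^0(X,\Omega^1_X)$ goes between $n$-dimensional spaces, hence is an isomorphism. Since both $\Omega^1_A$ and $\Omega^1_X$ are trivial and generated by their global sections, the induced map of locally free rank-$n$ sheaves $a^*\Omega^1_A\to\Omega^1_X$ is surjective, hence an isomorphism. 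Therefore $a$ is \'etale, and being proper it is finite \'etale. A connected finite \'etale cover of an abelian variety is again an abelian variety (it is dominated by some isogeny $[N]\colon A\to A$, and the group structure descends), so $X$ is abelian.

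The real content lies in the forcing step inside $(1)\Rightarrow(2)$: the identity $b_1=2\dim\alb(X)$ together with the two inequalities $g\le h^{1,0}$ and $g\le h^{0,1}$ pins down Picard smoothness and Hodge symmetry simultaneously. The remaining arcs of the cycle are essentially bookkeeping; I anticipate no serious obstacle beyond correctly marshalling the universal coefficient sequence and the crystalline--Dieudonn\'e comparison.
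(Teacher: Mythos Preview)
Your argument is correct and follows the same cycle $(1)\Rightarrow(2)\Rightarrow(3)\Rightarrow(1)$ as the paper, but the execution of each forward implication is organized differently.

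For $(1)\Rightarrow(2)$, the paper invokes Illusie's result that torsion-freeness of $H^2_{\cris}(X/W)$ forces $\Pic(X)$ to be reduced, so $h^{0,1}=g$ at once; the equality $h^{0,1}+h^{1,0}=2g$ then immediately gives $h^{1,0}=h^{0,1}$. You instead run a squeeze: from $h^{0,1}+h^{1,0}=2g$ together with the two inequalities $g\le h^{0,1}$ and $g\le h^{1,0}$ you force both to be equalities, recovering smoothness of $\Pic$ as a conclusion rather than an input. This is fine, but note that your one-line justification of $g\le h^{1,0}$ (``would force $\alb(X)$ to lie in a proper sub-abelian variety'') is not really the argument; the injectivity of $a^*$ on global $1$-forms is true in all characteristics (indeed the paper uses the inclusion $H^0(\alb(X),\Omega^1)\subset H^0(X,\Omega^1)$ freely in Remark~\ref{rmk:relaxed2}, with reference to \citep{illusie79b}), but it deserves a proper citation rather than the sketch you give.

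For $(2)\Rightarrow(3)$, the paper cites \citep[Lemma~1.4]{mehta87}, which gives directly that $X\to\alb(X)$ is smooth, surjective, with connected fibres and $\Omega^1_{X/\alb(X)}=0$; together with $\dim X=\dim\alb(X)$ this makes the Albanese map an isomorphism. You instead argue \'etaleness by hand (iso on global $1$-forms between trivial bundles forces the sheaf map $a^*\Omega^1_A\to\Omega^1_X$ to be an iso) and then invoke Serre--Lang to conclude that a connected finite \'etale cover of an abelian variety is abelian. Your route is self-contained and avoids citing Mehta--Srinivas, at the cost of not identifying $X$ with $\alb(X)$; the paper's route is shorter but leans on that lemma.
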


\begin{proof}
Let us prove $(1)\Rightarrow(2)\Rightarrow(3)\Rightarrow(1)$. Let us
begin with $(1)\Rightarrow(2)$. Assume that $X$ is minimally
Mazur-Ogus. The fact that $H^2_{cris}(X/W)$ is torsion-free implies
$\Pic(X)$ is reduced  (see \citep*{illusie79b}) and by the
universal coefficient theorem for crystalline cohomology one sees that
\be
	H^1_{cris}(X/W)\tensor_W k\mr{\sim}H^1_{dR}(X/k).
\ee
As Hodge to de Rham degenerates in dimension one, one sees that
\be
	\dim(H^1_{dR}(X/k))=h^{0,1}+h^{1,0}.
\ee
Reducedness of Picard variety means that
\be
	\dim(H^1_{cris}(X/W)\tensor_W k)=2h^{0,1}
\ee and the degeneration of
Hodge de Rham mean that
\be
	2h^{0,1}=h^{1,0}+h^{0,1}.
\ee
Thus one sees that
\be
	h^{1,0}=h^{0,1}.
\ee
Putting all this together one sees that $X$ is
Picard-Hodge symmetric variety. Thus one sees that
(1)$\Rightarrow$(2).

Now I prove (2)$\Rightarrow$(3). Suppose that $X$ is Picard-Hodge symmetric
variety and $X$ has trivial tangent bundle so $H^0(X, \Omega^1_X)$
has dimension $n=\dim(X)$. As $X$ is Picard-Hodge symmetric one sees
that
\be
	h^{0,1}=h^{1,0}=\dim(X).
\ee
Thus $\dim(\Pic(X))=\dim(X)$ and by hypothesis of (2)
$\Pic(X)$ is reduced. Hence the Picard variety is also the Albanese variety: $\Pic^0(X)=\alb(X)$ and in particular $$\dim(X)=\dim(\Pic(X))=\dim(\alb(X)).$$ Let $X\to\alb(X)$ be the Albanese morphism. By \citep*[Lemma 1.4]{mehta87} one sees that the
Albanese morphism $X\to\alb(X)$ is a smooth surjective morphism with connected fibres and
$\Omega^1_{X/\alb(X)}=0$. So $X\to\alb(X)$ is a finite, surjective \'etale morphism with connected fibres and hence it is an isomorphism.

Now it remains to prove that (3)$\Rightarrow$(1). This is standard (see \citep*{illusie79b}).
\end{proof}

The following corollary of \citep{deligne87} and Theorem~\ref{thm:characterisation} is immediate as one has Hodge de Rham degeneration in dimensions $\leq p-1$ for any $p$ (and hence in dimension one for any $p\geq 2$).

\begin{corollary}\label{cor:lifting}
Let $X/k$ be a smooth, projective variety with trivial tangent bundle. Suppose $X$ satisfies the following:
\begin{enumerate}
\item $H^2_{cris}(X/W)$ is torsionfree,
\item $X$ lifts to $W_2$.
\end{enumerate}
Then $X$ is an abelian variety.
\end{corollary}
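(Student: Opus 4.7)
The plan is to reduce Corollary~\ref{cor:lifting} directly to Theorem~\ref{thm:characterisation} by showing that under the given hypotheses $X$ is minimally Mazur-Ogus. The first condition of Definition~\ref{def:minimal}, namely torsion-freeness of $H^2_{cris}(X/W)$, is assumed outright. So the only work is to check that the Hodge to de Rham spectral sequence degenerates in dimension one; once this is done, Theorem~\ref{thm:characterisation} immediately gives that $X$ is an abelian variety.

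To obtain the dimension-one Hodge-de Rham degeneration, the plan is to invoke the Deligne-Illusie theorem \citep*{deligne87}. The precise form I would use is not the crude version requiring $\dim X < p$, but the sharper local statement: for $X$ smooth over $k$ admitting a lift $\tilde X$ to $W_2(k)$, the Frobenius pushforward of the truncation $\tau_{<p}\Omega^\bullet_{X/k}$ is quasi-isomorphic to $\bigoplus_{i<p}\Omega^i_{X'/k}[-i]$. Taking hypercohomology, this decomposition forces the Hodge-de Rham spectral sequence to degenerate on every term contributing to $H^n_{dR}(X/k)$ with $n\leq p-1$. Since $p\geq 2$, this gives degeneration in degree $n=1$, which is exactly condition (2) of Definition~\ref{def:minimal}.

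With both conditions of Definition~\ref{def:minimal} verified, $X$ is minimally Mazur-Ogus, and the implication $(1)\Rightarrow(3)$ of Theorem~\ref{thm:characterisation} yields that $X$ is an abelian variety. There is no real obstacle here; the whole point of packaging the hypothesis as ``minimally Mazur-Ogus'' in Theorem~\ref{thm:characterisation} is that the two ingredients — torsion-freeness of $H^2_{cris}$ and the very mild dimension-one Hodge-de Rham degeneration — are the smallest pieces of Mazur-Ogus data that the argument for $(1)\Rightarrow(2)$ actually uses, and the $W_2$ lift supplies the second ingredient for free via Deligne-Illusie. The only subtlety worth flagging is that one does not need full Hodge-de Rham degeneration (so one does not need the dimension hypothesis $\dim X< p$ of the global Deligne-Illusie theorem), because the truncated decomposition suffices for the $H^1$ that enters the proof of Theorem~\ref{thm:characterisation}.
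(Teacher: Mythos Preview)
Your proposal is correct and follows essentially the same route as the paper: verify that $X$ is minimally Mazur-Ogus by combining the assumed torsion-freeness of $H^2_{cris}(X/W)$ with the Deligne--Illusie degeneration in degrees $\leq p-1$ (hence in degree one for all $p\geq 2$), then invoke Theorem~\ref{thm:characterisation}. Your remark that only the truncated form of \citep{deligne87} is needed, not the hypothesis $\dim X<p$, is exactly the point the paper is making in its one-line justification.
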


\br
Let me point out that for  the Igusa surface ($p=2,3$), $H^2_{cris}(X/W)$ is not torsion-free (but Hodge-de Rham degenerates in dimension one) and Hodge symmetry is true in dimension one, but $\Pic(X)$ is not reduced.
\er

\br\label{rmk:relaxed2} 
In his recent email to me, KeZheng Li has suggested that perhaps, any smooth, projective variety with trivial tangent bundle and reduced Picard scheme is an abelian variety. This is certainly natural expectation. I include some comments on this question.

Firstly let me point out that there are two important numbers $\dim(X)=\dim H^0(X,\Omega^1_X)$ and $\dim(\Pic(X))=\dim H^1(X,\O_X)$ which must be equal if this assertion holds. On the other hand even if $\Pic(X)$ is reduced, it seems difficult to prove that these two numbers are equal without some additional crystalline torsion-freeness hypothesis. Note that the pull-back of one-forms on $\Pic(X)=\alb(X)$, by $X\to\alb(X)$, lands inside the subspace of closed one-forms $H^0(X,Z_1\Omega^1_X)$ and all of the following inclusions 
$$H^0(\alb(X),\Omega^1_{\alb(X)})\subset H^0(X,Z_1\Omega^1_X)\subset H^0(X,\Omega^1_X)$$ are strict in general. By \citep[Prop.~5.16, page 632]{illusie79b} the hypothesis that $H^2_{cris}(X/W)$ is torsionfree is equivalent to reducedness of $\Pic(X)$ and the equality $H^0(\alb(X),\Omega^1_{\alb(X)})=H^0(X,Z_1\Omega^1_X)$. In particular the second inclusion does not become equality even if we assume $H^2_{cris}(X/W)$ is torsionfree, and so it is not possible to work with a simpler hypothesis: $\Pic(X)$ is reduced at the moment. 

Secondly let me point out that the reducedness of Picard scheme controls only a part of the crystalline torsion which may arise in this situation. Torsion arising from non-reducedness of $\Pic(X)$ is of a fairly mild sort (``divisorial torsion'' in the terminology of \citep{illusie79b}). But Ekedahl has shown that the self-product of the Igusa type surface with itself carries exotic torsion in $H^3$. It is possible that a similar example (of dimension bigger than two)  exists in which $H^2_{cris}(X/W)$ has exotic torsion, since there is a plethora of examples (see Theorem~\ref{thm:igusa-var2}) in any dimension for $p=2$ and one can probably use deformation theory to provide examples with subtler torsion behaviour. 

So relaxing the conditions in Theorem~\ref{thm:characterisation} seems a bit too optimistic (to me) and at any rate requires a fuller understanding of the crystalline cohomology of varieties with trivial tangent bundles (which I do not possess).

It is possible to provide alternate formulations of Theorem~\ref{thm:characterisation}, but I have chosen formulations which are easiest to deal with in practice.
\er

\section{Surfaces with trivial tangent bundle}
Let $X/k$ be a smooth projective variety over an algebraically
closed field of characteristic $p>0$. The main theorem of this section is the following. This was conjectured by KeZheng Li in \citep*[Conjecture 4.1]{li10}.

\begin{theorem}\label{thm:surface-case}
Let $X/k$ be a smooth projective surface over an algebraically
closed field of characteristic $p>3$ and assume that the tangent
bundle $T_X$ of $X$ is trivial. Then $X$ is an abelian surface.
\end{theorem}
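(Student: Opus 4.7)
The plan is to extract numerical constraints from triviality of $T_X$ and then apply the Bombieri-Mumford classification of smooth projective minimal surfaces of Kodaira dimension zero in characteristic $p>3$ to eliminate every non-abelian possibility.

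First, from $\Omega^1_X\cong\O_X^{\oplus 2}$ I deduce $K_X=\det\Omega^1_X\cong\O_X$, $c_1(X)^2=c_2(X)=0$, and $h^{1,0}(X)=h^0(X,\Omega^1_X)=2$. Adjunction forbids $(-1)$-curves, since any such $C$ would satisfy $C\cdot K_X=-1$, so $X$ is minimal. Noether's formula then gives $\chi(\O_X)=\tfrac{1}{12}(c_1^2+c_2)=0$, and combined with $h^{0,2}=h^0(K_X)=1$ this forces $h^{0,1}=2$. In particular Hodge symmetry in dimension one holds: $h^{0,1}=h^{1,0}=2$.

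Next I would invoke the Bombieri-Mumford classification: a smooth projective minimal surface with $\kappa(X)=0$ in characteristic $p>3$ is either a K3 surface, an abelian surface, an Enriques surface, or a (quasi-)hyperelliptic surface. Because $K_X$ is trivial \emph{as a line bundle} (not merely numerically), the Enriques and (quasi-)hyperelliptic cases are excluded, since their canonical sheaves have nontrivial torsion order when $p>3$. The K3 case is incompatible with $h^{1,0}=2\neq 0$. Hence $X$ must be an abelian surface. Equivalently, the conclusion can be packaged through Theorem~\ref{thm:characterisation}: Hodge symmetry in dimension one has already been verified, and smoothness of $\Pic(X)$ follows from the classification, so $X$ is Picard-Hodge symmetric and therefore abelian.

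The main obstacle, and the source of the hypothesis $p>3$, is the appeal to the Bombieri-Mumford classification. In characteristics $2$ and $3$ the classification admits additional minimal surfaces of Kodaira dimension zero with trivial canonical bundle (the Igusa surface when $p=2$ and its $p=3$ analogue) whose Picard schemes are non-reduced, and the theorem genuinely fails for these examples. Any proof that avoids the classification would need to establish either torsion-freeness of $H^2_{cris}(X/W)$ or reducedness of $\Pic(X)$ for an arbitrary surface with trivial tangent bundle, and neither appears easier to prove than the classification input itself.
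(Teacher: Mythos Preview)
Your proof is correct and takes essentially the same route as the paper: compute the numerical invariants from triviality of $T_X$, apply the Bombieri--Mumford classification of minimal surfaces with $\kappa=0$, and exclude the hyperelliptic case for $p>3$ by noting that $K_X\cong\O_X$ while such surfaces have canonical bundle of order $2,3,4$ or $6$. The only cosmetic difference is that the paper first filters through the Betti-number pairs $(b_1,b_2)\in\{(4,6),(2,2)\}$, which disposes of K3 and Enriques in one stroke via $b_1\ge 2$, whereas you eliminate those two cases directly.
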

\begin{proof}
As $T_X=\O_X\oplus\O_X$ one sees that $\Omega^1_X=O_X\oplus\O_X$ and
so $\Omega^2_X=\O_X$. Thus $c_1(X)=0$ and also as $T_X$ is trivial
one sees that $c_2(X)=0$. Now it is immediate by the adjunction formula (see \citep*{hartshorne-algebraic}) that $X$ is a minimal surface  of Kodaira dimension $\kappa(X)=0$.

By Noether's formula $12\chi(\O_X)=c_1^2+c_2$ (see \citep*{hartshorne-algebraic}), one sees that
\be
	\chi(\O_X)=0.
\ee This means
\be
\chi(\O_X)=0=h^0-h^{0,1}+h^{0,2};
\ee
and as $K_X=\O_X$
by Serre duality one sees that $H^2(\O_X)=H^0(\O_X)$, and hence  that
\be
	h^{0,1}=2.
\ee
Next $c_2=0$ gives
\be
	c_2=b_0-b_1+b_2-b_3+b_4=2-2b_1+b_2=0.
\ee
Thus one sees that $b_1\neq 0$ and one has $b_2\neq 0$ because $X$ is
projective (the Chern class of any ample class is non-zero in
$H^2_{\et}(X,\Q_\ell)$). Now $b_1$ is even as $b_1$ is the Tate
module of the Albanese variety of $X$ (which is reduced by
definition). Thus one has $b_1\geq 2$.

Then by \citep*[Page 25]{bombieri77} one sees that there are exactly two
possibilities for the pair $(b_1,b_2)$: either $(b_1,b_2)=(4,6)$
or $(b_1,b_2)=(2,2)$. If one is in the first case, by classification
of \citep*[Page 25]{bombieri77} $X$ is an abelian surface.

If not, one is in the second case. In this case one has $b_1=2$ so
$q=1$ and $h^1(\O_X)=2$. Thus one sees that $\Pic(X)$ is non-reduced
and at any rate the surface $X$ is hyperelliptic and as $p>3$,
classification (see \citep*[Page 37]{bombieri77}) shows that the order
of $K_X$ must be one of $2,3,4,6$ which is at any rate $>1$. On the
other hand one has $K_X=\O_X$. Thus $X$ cannot be hyperelliptic.

So one sees that the second case cannot occur and $X$ is an abelian
surface as asserted.
\end{proof}

By  a \textit{family of varieties with trivial tangent bundle} I mean a proper, flat $1$-morphism of stacks $f:X\to M$, with $M$ a Deligne-Mumford stack (over schemes over $k$) such that $f$ is schematic and for every morphism of stacks $\spec(k')\to M$ with $k'\supset k$ a field, the fibre product $X\times_M\spec(k')$ is a geometrically connected, smooth, projective scheme over $k'$ with trivial tangent bundle.

The construction of Igusa surface (\citep*{igusa55}) leads to the following (for another variant of this construction see Proposition~\ref{prop:minimal-examples}). For $g\geq1$, let $\fm_{g}$ be  moduli stack of abelian varieties of dimension $g$ over $k$ (see \citep*{faltings-book,mumford-GIT}). Let $\fm_g^{\rm ord}$ be the dense open substack of ordinary abelian varieties in the moduli  stack of abelian varieties of dimension $g$ over $k$, more generally let $\mathcal{U}_g^{\rm \geq 1}[p]\subset\fm_g$ be the stack of abelian varieties  of dimension $g$, with a point of order $p$.

\bth\label{thm:igusa-var2}
Let $k$ be an algebraically closed field of characteristic $p=2$. Then for every $g\geq 2$, and for any $1\leq r<g$, there exists a family, parameterized by $\mathcal{U}_r^{\rm \geq 1}[p]\times \fm_{g-r}$ of smooth, projective varieties of dimension $g$ over $k$ which are not abelian varieties and with trivial tangent bundles. In particular there is a family  parameterized by $\fm_r^{\rm ord}[p]\times \fm_{g-r}$, of smooth, projective varieties of dimension $g$ over $k$ which are not abelian varieties and with trivial tangent bundles.
\eth

\bp
First let me recall the following version of Igusa's construction (see \citep{igusa55}). For additional variants of Igusa's construction see Proposition~\ref{prop:minimal-examples} below.
Let $B_1$ be an abelian variety of dimension $r$ over $k$ with $2$-rank at least one, let $t\in B_1[2](k)$ be a non-trivial two torsion point. Let $B_2$ be any abelian variety over $k$ of dimension $g-r$. Then consider the Igusa action on $A=B_1\times B_2\to B_1\times B_2$ given by $(x,y)\mapsto (x+t,-y)$. Then this gives an action of $\Z/2$ on $A$ which is fixed point free and
\be
	H^0(A,\Omega^1_A)^{\Z/2}= H^0(A,\Omega^1_A),
\ee
as $\Z/2$ acts by translation on the first factor and so acts trivially on one forms of $B_1$ and on the second factor the action on the space of one forms of $B_2$ is by $-1=1$ and hence is trivial on the space of one forms on the second factor as well. Let $X$ be the quotient of $A$ by this $\Z/2$ action. Then $T_X$ is trivial (as $H^0(X,T_X)=H^0(A,T_A)$). On the other hand by Igusa, ${\rm Alb}(X)=B_1/\langle t\rangle$ and so $X$ is not an abelian variety and $\Pic(X)$ is not reduced.

Now one simply has to note that one can carry out Igusa's construction on the universal abelian scheme over the moduli stack of abelian schemes (of the above sort).
\ep

For $p=3$ the result is a little weaker, by simply taking products with an abelian variety one gets:
\bth\label{thm:igusa-var3}
Let $p=3$ and $k$ be an algebraically closed field of characteristic $p$. Then for every  $g\geq 2$, there exists a family of smooth, projective varieties of dimension $g$ over $k$ which are not abelian varieties and with trivial tangent bundle.
\eth

\br\label{rem:optimal}
Note that for the Igusa surface one has $\dim H^0(X,\Omega^1_X)=\dim H^1(X,\O_X)$ so Hodge symmetry holds and Hodge-de Rham does degenerate at $E_1$ but $\Pic(X)$ is not reduced and $H^2_{cris}(X/W)$ has torsion. Varieties $X$, constructed as in  Theorem~\ref{thm:igusa-var2} from ordinary abelian varieties, have the property that they are ordinary with trivial tangent bundle;  one has lifting to $W_2$ (by \citep[Theorem~9.1]{joshi07} of V.~B.~Mehta) and hence Hodge-de Rham degenerates in dimension $<p$ (by \citep{deligne87}), but  $H^2_{cris}(X/W)$ is not torsion-free.  Thus these varieties are neither Picard-Hodge symmetric nor are they minimally Mazur-Ogus.
\er

\section{Ordinary varieties with trivial tangent bundle}\label{fsplit}
I give a proof of the following theorem.
\bth\label{thm:fsplit}
Let $X$ be a smooth, projective variety with trivial tangent bundle. Then the following are equivalent:
\begin{enumerate}
\item[$(1)$] $X$ is ordinary and minimally Mazur-Ogus,
\item[$(1')$] $X$ is ordinary and Picard-Hodge symmetric,
\item[$(2)$] $X$ is Frobenius split and minimally Mazur-Ogus,
\item[$(2')$] $X$ is Frobenius split and Picard-Hodge symmetric,
\item[$(3)$] $X$ is ordinary and $H^2_{cris}(X/W)$ is torsion-free,
\item[$(3')$] $X$ is Frobenius split and $H^2_{cris}(X/W)$ is torsion-free,
\item[$(4)$] $X$ is an ordinary abelian variety.
\end{enumerate}
\eth
\bp
The equivalences $(1)\iff (1')$ and $(2)\iff (2')$ are clear from the proof of Theorem~\ref{thm:characterisation}. The equivalence $(3)\iff (3')$ is \citep*[Lemma~1.1]{mehta87}. The equivalence (1) $\iff$ (2) is immediate from \citep*[Lemma~1.1]{mehta87} as $X$ is ordinary if and only if $X$ is Frobenius split. Now  (2) $\implies$ (3) is clear from the Definition~\ref{def:minimal} and by \citep*{mehta87}. Now  to prove (3) $\implies$ (4). This is immediate from Theorem~\ref{thm:characterisation}, provided one proves that Hodge de Rham spectral sequence degenerates in dimension $\leq 1$. In other words one has to show that the hypothesis of $(3)$ implies that $X$ is minimally Mazur-Ogus. This is proved as follows. Any smooth, projective variety with trivial tangent bundle is ordinary if and only if it is Frobenius split (see \citep*[Lemma~1.1]{mehta87}). A result of V.~B.~Mehta (see \citep*[Theorem~9.1]{joshi07}) says that a Frobenius split variety $X$ lifts to $W_2$ and hence Hodge de Rham degenerates in dimension $\leq p-1$ by \citep*[Corollaire~2.4]{deligne87}. Hence one has degeneration in dimension one for any $p\geq 2$. Hence hypothesis of (3) imply that $X$ is Mazur-Ogus. So the assertion (3) $\implies$ (4) follows from Theorem~\ref{thm:characterisation}. Now (4) $\implies$ (1) is standard (see \citep*{illusie79b}).
\ep

\begin{corollary}
Let $X$ be a smooth, projective, ordinary variety, with trivial tangent bundle. Then $X$ is an (ordinary) abelian variety if and only if  $H^2_{cris}(X/W)$ is torsion-free.
\end{corollary}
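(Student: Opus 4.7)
The plan is to deduce this corollary directly from Theorem~\ref{thm:fsplit}, since it is precisely the equivalence of conditions $(3)$ and $(4)$ in that theorem. No new machinery is required beyond reassembling implications that have already been established in the excerpt.

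For the forward direction — if $X$ is an ordinary abelian variety then $H^2_{cris}(X/W)$ is torsion-free — I would cite the standard torsion-freeness of crystalline cohomology of abelian varieties from \citep*{illusie79b}; this is the content of the implication $(4)\Rightarrow(1)$ in Theorem~\ref{thm:fsplit}.

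For the reverse direction, assume $X$ is ordinary with trivial tangent bundle and $H^2_{cris}(X/W)$ torsion-free. First I would invoke \citep*[Lemma~1.1]{mehta87}: for a smooth projective variety with trivial tangent bundle, ordinarity is equivalent to Frobenius splitness. Then by V.~B.~Mehta's theorem on $W_2$-liftability of Frobenius split varieties (\citep*[Theorem~9.1]{joshi07}) combined with Deligne--Illusie \citep*{deligne87}, the Hodge-to-de~Rham spectral sequence degenerates in dimensions $\leq p-1$, and in particular in dimension one for every $p\geq 2$. Together with the hypothesis that $H^2_{cris}(X/W)$ is torsion-free, this verifies both conditions of Definition~\ref{def:minimal}, so $X$ is minimally Mazur-Ogus. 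An application of Theorem~\ref{thm:characterisation} then identifies $X$ with an abelian variety, and the ordinarity is preserved by hypothesis.

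There is no real obstacle in this argument; the corollary is essentially bookkeeping on top of Theorem~\ref{thm:fsplit}. The only point worth highlighting is the chain \emph{ordinary $\Rightarrow$ Frobenius split $\Rightarrow$ lifts to $W_2$ $\Rightarrow$ Hodge-de~Rham degenerates in dimension one}, which is what upgrades the torsion-freeness hypothesis on $H^2_{cris}$ to the full minimally Mazur-Ogus condition required to feed into Theorem~\ref{thm:characterisation}.
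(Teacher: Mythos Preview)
Your proposal is correct and matches the paper's approach exactly: the corollary is stated without proof immediately after Theorem~\ref{thm:fsplit}, being nothing more than the equivalence $(3)\Leftrightarrow(4)$ there, and the chain \emph{ordinary $\Rightarrow$ Frobenius split $\Rightarrow$ lifts to $W_2$ $\Rightarrow$ Hodge--de~Rham degenerates in dimension one} that you spell out is precisely the argument the paper uses in proving $(3)\Rightarrow(4)$.
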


\section{New proof of Li's Theorem}
In this section I give a new proof of Li's Theorem (see \citep{li91,li10}) and prove the following refinement.
\begin{defn}\label{def:minimal-cover}
Let $X$ be a smooth, projective variety with trivial tangent bundle and suppose $A\to X$ is Galois an \'etale cover by an abelian variety. I say that $A\to X$ is a minimal Galois \'etale cover of $X$ is there is no factorization of $A\to X$ into \'etale morphisms $A\to A'\to X$ with $A'$ an abelian variety and $A'\to X$ Galois.
\end{defn}

Since an abelian variety $A$ has a non-trivial fundamental group, non-minimal \'etale covers exist if $G\neq\{1\}$.

\bth\label{th:li-newproof}
Let $k$ be an algebraically closed field of characteristic $p>0$. Let $X/k$ be a smooth, projective, ordinary variety with trivial tangent  bundle.
\begin{enumerate}
\item Either $X$ is an abelian variety, or
\item $p=2$ and $X$ has a minimal Galois \'etale cover by an abelian variety with Galois group of exponent $p$ (i.e every element is of order $p$).
\end{enumerate}
\eth
\bp
Let  $X$ be as in the statement of the theorem and suppose $X$ is not an abelian variety. By \citep{mehta87} there exists an ordinary abelian variety $A/k$ and a finite, Galois \'etale morphism $A\to X$ with Galois group $G$ of order a power of $p$ which acts freely on $X$. By passing to a quotient of $G$ if needed, one may assume that $A\to X$ is a minimal Galois \'etale cover of $X$. In particular $A$ carries fixed point-free automorphisms $\sigma:A\to A$ of order  $d=p^m$ a power of $p$. If $d=1$ for every element of $G$ then this is already case (1) so there is nothing to do; if $d=2$ for every element of $G$ then one is  in case (2) so again there is nothing to prove. So  assume $d=p^m\geq 3$ for some element  $\sigma\in G$.

Then, by \citep*[Lemma 3.3]{lange01} (the proof given there is characteristic free--and the argument is sketched below for convenience),  there are abelian varieties $A_1,A_2$ such that $A$ is isogenous to $A_1\times A_2$ and that $\sigma|_{A_1}$ is a translation, and $\sigma|_{A_2}$ is an automorphism (possibly with fixed points) of order a power of $d$. Indeed, write $\sigma=t_x\circ\sigma'$ where $t_x$ is a translation, $\sigma'$ an automorphism of order a power of $d$ and one may take $A_1$ to be the connected component of $\ker(1-\sigma')$ and $A_2={\rm image}(1-\sigma')$. As $A$ is ordinary, so are $A_1$ and $A_2$. One assumes, without loss of generality, that $\sigma'$ is a homomorphism of $A_2$. Now $(A_2,\sigma')$ admits a canonical Serre-Tate lifting to $W(k)$ (see \citep[Theorem~1(2) of Appendix]{mehta87}), and in particular a lifting $(B_2,\sigma')$ of $(A_2,\sigma')$ to complex numbers exists. So starting with $X$ one has arrived at an abelian variety $B_2$ over $W(k)$ and an automorphism  $\sigma': B_2\to B_2$ of finite order, with possibly finitely many fixed points. Replacing $B_2$ by a subabelian variety if needed, one may assume that $\sigma'$ is not a translation on any subvariety of $B_2$.

Now I proceed by an algebraic variant of \citep[Proposition 13.2.5 and Theorem 13.3.2]{lange-book}. This is done as follows. Let $\Phi_{d}(X)$ be the $d$--cyclotomic polynomial. So $\Phi_d(X)|(X^d-1)$ and $\Phi_d(X)$ is irreducible and the primitive $d$-th roots of unity are its only roots. Let $f$ be the endomorphism $\frac{{\sigma'}^d-1}{\Phi_d(\sigma')}$ of $B_2$, i.e., consider the polynomial $$f(X)=\frac{X^d-1}{\Phi_d(X)}\in\Z[X]$$ and consider the endomorphism $f:=f(\sigma'):B_2\to B_2$. Consider the subvariety $$B_3=f(B_2)\subset B_2.$$ Then $B_3$ is an abelian variety annihilated by $\Phi_d(\sigma')$ and hence is naturally a $\Z[\zeta_d]$-module. Moreover $B_3$ has good ordinary reduction at $p$, denoted $A_3$, and in particular $H^1_{dR}(B_3/W)=H^1_{cris}(A_3/W)$ is a $\Z[\zeta_d]\tensor_{\Z_p}W(k)$-module which is finitely generated and $\Z[\zeta_d]$-torsion free and hence projective of rank $k=2\dim(B_3)/\phi(d)$. Now every finitely generated projective module over $\Z[\zeta_d]$ of rank $k$ is a direct sum of ideals
$I_1\oplus I_2\oplus\cdots\oplus I_k$ of $\Z[\zeta_d]$. Using this  one sees that, up to isogeny, one may factor $B_3$ into product of $k$ abelian varieties $B_{3,1},\ldots,B_{3,k}$ each of dimension $\phi(d)/2$ (over $\C$ this is proved by an analytic argument, attributed to an unpublished result of S.~Roan, in \citep[Theorem~13.2.5]{lange-book}). Each of these varieties has (possibly up to isogeny) $\Z[\zeta_d]\into \End(B_{3,i})$ and as $2\dim(B_{3,i})=\phi(d)$, so each has complex multiplication by $\Z[\zeta_d]$. Fix one of these abelian varieties, say, $B_{3,1}$. Then by a basic result \citep[Theorem 3.1, page 8]{lang-CM-book} $B_{3,1}$ is isotypic with a simple abelian variety factor $B$ with complex multiplication by a CM subfield of $\Q(\zeta_d)$. Further $B$ has good ordinary reduction at $p$ (by virtue of its construction from $B_{3,1}$ which has ordinary reduction at $p$).

On the other hand note that $p$ is totally ramified in the cyclotomic field $\Q(\zeta_d)$ as $d=p^m\geq 3$, so $p$ is also totally ramified in the CM subfield for $B$. Hence one  sees, by \citep{yu04} or \citep[Prop. 3.7.1.6, Prop. 4.2.6]{conrad-book}, that the special fiber of $B$ at $p$  is  isoclinic of positive slope (equal to half). So it cannot be ordinary. This is a contradiction.

Thus  $d=p^m\leq 2$ and if $X$ is not an abelian variety  then one is in case (2). This completes the proof.
\ep

If $A$ is an abelian variety then $A$ acts on itself by translations. In particular translation by a non-trivial point of order $p$ is an automorphism of $A$ of order $p$. In what follows I say that an automorphism $\rho:A\to A$ is a \textit{non-trivial automorphism} if $\rho$ is not a pure translation.
Before proceeding let me point out the following variant of \citep{igusa55}.
\bpro\label{prop:minimal-examples}
For every algebraically closed field $k$ of characteristic $p=2$ or $p=3$, and for every $n\geq 1$ and for every integer $N>n$, there exists a smooth, projective variety $X/k$, of $\dim(X)=N$, with trivial tangent bundle and a minimal Galois \'etale cover with $G=(\Z/p)^n$.
\epro
\bp
Let $A,A_1, A_2,\ldots, A_{n}$ be abelian varieties over $k$  satisfying the following conditions:
\begin{enumerate}
\item Let $\rho_i:A_i\to A_i$, for $1\leq i\leq n$, be  a non-trivial automorphism of order $p$, such that for every $i$ the subspace of $\rho_i$-invariant one forms $H^0(A_i,\Omega^1_{A_i})^{\langle \rho_i\rangle}=H^0(A_i,\Omega^1_{A_i})$,
\item and one has  $\dim(A)+\dim(A_1)+\cdots+\dim(A_{n})=N$;
\item suppose $A$ has $p$-rank at least one.
\end{enumerate}

For $p=2$ any abelian varieties $A,A_1,\ldots,A_n$ satisfying the last two conditions satisfy the first with the automorphism $\rho_i:A_i\to A_i$ being $\rho_i(x)=-x$ for all $x\in A_i$ for $1\leq i\leq n$.  The condition on invariant forms is trivially satisfied as $-1=+1$ because $p=2$.

For $p=3$ consider an elliptic curve $E/k$ with a non-trivial automorphism of order $p=3$. Let $A_i=E$ for $1\leq i\leq n$.  The condition on invariants is trivially satisfied as $\Z/p=\Z/3$ operates unipotently on $H^0(E,\Omega^1_E)$ and as any unipotent action has a non-zero subspace of invariants and as $H^0(E,\Omega^1_E)$ is one dimensional, all one forms are invariant under this non-trivial automorphism of order three.

Thus for any $p=2,3$ one has abelian varieties satisfying all the three conditions.
Let $t\in A[p]$ with $t\neq 0$ be a point on $A$ of order $p$. Let $G=(\Z/p)^n$ and consider its elements  as vectors $(g,g_2,\ldots,g_{n-1})$ with entries in $\Z/p$ and let $G$ operate on $$B=A\times A_1\times A_2\times \cdots\times A_{n}$$ as follows: $$(1,g_2,\ldots,g_{n})\cdot(x,x_1,\ldots,x_{n})=(x+t,\rho_1(x_1),\rho_2^{g_2}(x_2),\ldots,\rho_n^{g_n}(x_n)),$$
and
with the usual convention $\rho_i^{0}=1$ (note the asymmetry in my notation and construction--this is intended to include Igusa surfaces for $n=1,N=2$). Then $G$ acts free of fixed points and the quotient $X=B/G$ is a smooth, projective variety with trivial tangent bundle with minimal \'etale cover with Galois group $G$ and $\dim(X)=N$.
\ep
\br
Let me give an example of an abelian variety $A$ in characteristic $p>3$ with $\dim(A)>1$ and a non-trivial automorphism $\rho:A\to A$ of order $p$, which shows that the condition on space of  invariants is not satisfied in general. Let $A$ be the Jacobian of the hyperelliptic curve $y^2=x^p-x$. Then the automorphism $(x,y)\mapsto (x+1,y)$ of $y^2=x^p-x$ is an automorphism of order $p$ (and hence of $A$). Using a standard basis for computing forms, one checks that the subspace of invariant forms is not of dimension equal to $\dim(A)$.
\er

\section{Variants of Li's Conjecture}
In \citep*[Conjecture 4.1]{li10} it was conjectured that for $p>3$ every smooth, projective variety with trivial tangent bundle is an abelian variety. Let me remark that the construction in Proposition~\ref{prop:minimal-examples}  also works for $p>3$, \textit{except} for the fact that I do not know how to construct abelian varieties satisfying the hypothesis on invariant forms in condition (1) above. But it is possible that abelian varieties satisfying conditions (1)--(3) in the proof of Proposition~\ref{prop:minimal-examples} might exist for sufficiently large $p$. Hence in the light of this remark and Theorem~\ref{thm:characterisation} it seems to me that perhaps  Conjecture of \citep*[Conjecture 4.1]{li10} needs to be modified. In fact, I propose two separate conjectures, depending on whether one fixes the characteristic or one fixes the dimension. Both the conjectures should be true. The fixed dimension version is inspired by \citep*{liedtke09}. I note that Conjecture~\ref{li-mod-dim} replaces \citep*[Conjecture 4.1]{li10}.

\bcon[Fixed Dimension Version]\label{li-mod-dim} Let $d\geq 1$ be a fixed integer. Then there exists an integer $n_0(d)\geq 1$ with the following property: for any smooth, projective variety $X/k$ of dimension $d$ over an algebraically closed field $k$ and with trivial tangent bundle is an abelian variety if $p>n_0(d)$. For $d=1$, $n_0(d)=1$; for $d=2$, one has $n_0(d)=3$ (by Theorem~\ref{thm:surface-case}).
\econ

Before I state the fixed characteristic version, let us make the following elementary observation.

\blem\label{lem:n1} There exists an integer $n_1(p)\geq 0$ with the following property. For any smooth, projective variety $X$ of dimension $d$ over an algebraically closed field of characteristic $p>0$. If $d<n_1(p)$ then $X$ is an abelian variety.
\elem
\bp
Suppose, for a given $p$, there exists a smooth, projective variety $Z$ with trivial tangent bundle which is not an abelian variety. Then for every integer $n\geq \dim(Z)$, there exists a variety $Y$ of this sort with $\dim(Y)=n$. Indeed one may simply take $Y=Z\times E^{n-\dim(Z)}$ for any elliptic curve $E$.  So take a variety $Z$ with the above properties of the smallest dimension and let $n_1(p)=\dim(Z)$. If no such variety $Z$ exists one can simply take $n_1(p)=0$. Then every smooth projective variety $X$ of dimension $\dim(X)<n_1(p)$ is an abelian variety by construction.
\ep
For $p=2,3$, $n_1(p)=2$ by Theorem~\ref{thm:surface-case}. The following is the fixed characteristic version of the conjecture.

\bcon[Fixed Characteristic Version]\label{li-mod-p} Let $p$ be any fixed prime number.
The number $n_1(p)$ constructed in Lemma~\ref{lem:n1} has the property that $n_1(p)\geq 4$ for $p\geq 5$.
\econ

%
\bibliographystyle{plainnat}

\end{document}